\newtheorem{theorem}{Theorem}
\newtheorem{lemma}[theorem]{Lemma}
\newtheorem{observation}[theorem]{Observation}
\newtheorem{definition}[theorem]{Definition}
\newenvironment{proof}%
{\begin{trivlist}\item[]{\bf Proof:} }%
{\mbox{~~}\hfill$\Box$\end{trivlist}}
\newenvironment{example}%
{\begin{trivlist}\item[]{\bf Example:} }%
{\mbox{~~}\hfill$\Box$\end{trivlist}}
\newcommand{\rn}{{\rm RN}}
\newcommand{\ru}{{\rm RU}}
\newcommand{\rd}{{\rm RD}}
\newcommand{\rz}{{\rm RZ}}
\newcommand{\ra}{{\rm RA}}
\newcommand{\tr}{{\rm tr}}
\newcommand{\txt}{\textstyle}
\newcommand{\mb}{\makebox(0,0)}
\newcommand{\ulp}{{\rm{ulp}}}
\newcommand{\eps}{\varepsilon}
\newcommand{\lzd}{{\tt lzd}}
\newcommand{\val}{{\mathcal{V}}}
\newcommand{\inv}{{\mathcal{I}}}
\newcommand{\specialdescriptionlabel}[1]{\bf #1\hfil}
\title{\bf Floating Point Arithmetic on\\ Round-to-Nearest Representations}
\author{%
\begin{tabular}{c}
Peter~Kornerup\\
\textit{SDU, Odense, Denmark} \\
\end{tabular}
\begin{tabular}{c}
Jean-Michel~Muller\hspace{5mm}
% \textit{LIP, CNRS/ENS Lyon, France} \\
% \end{tabular}
% \begin{tabular}{c}
Adrien  Panhaleux\\
\textit{LIP, CNRS/ENS Lyon, France} \\
\end{tabular}}
\date{October 7, 2011}
\begin{document}

\maketitle

\begin{abstract}
  Recently we introduced a class of number representations denoted
  RN-representations, allowing an un-biased rounding-to-nearest to take
  place by a simple truncation. In this paper we briefly review the binary
  fixed-point representation in an encoding which is essentially an
  ordinary 2's~complement representation with an appended round-bit. Not
  only is this rounding a constant time operation, so is also sign
  inversion, both of which are at best log-time operations on ordinary
  2's~complement representations.  Addition, multiplication and division is
  defined in such a way that rounding information can be carried along in a
  meaningful way, at minimal cost. Based on the fixed-point encoding we
  here define a floating point representation, and describe to some detail
  a possible implementation of a floating point arithmetic unit employing
  this representation, including also the directed roundings.
\end{abstract}

\section{Introduction}
In \cite{KM05b} a class of number representations denoted RN-Codings were
introduced, ``RN'' standing for ``round-to-nearest'', as these
radix-$\beta$, signed-digit representations have the property that
truncation yields rounding to the nearest representable value.  They are
based on a generalization of the observation that certain radix
representations are known to posses this property, e.g., the balanced
ternary ($\beta=3$) system over the digit set $\{-1,0,1\}$.  Another such
representation is obtained by performing the original Booth-recoding
\cite{Boo51} on a 2's~complement number into the digit set $\{-1,0,1\}$,
where it is well-known that the non-zero digits of the recoded number
alternate in sign. Besides the simplicity of rounding by truncation, it has
the feature that the effect of one rounding followed by another rounding
yields the same result, as would be obtained by a single rounding to the
same precision as the last. It is a known problem when using any ``extended
precision'' of the IEEE-754 standard \cite{IEEE08}, performing computations
in ``extended-80'' format representation (e.g., the Intel extended double
precision) storing the result in the binary ``basic-64'' format.

When we are not concerned with the actual encoding of a value, we shall
here use the notation {\em RN-representation}. This representation was
further discussed in \cite{KMP11}, where a special encoding of the
Booth-recoded binary representation was introduced. This encoding, termed
the {\it canonical} encoding, is based on the ordinary 2's~complement
representation, but with an appended round-bit, allowing round-to-nearest
by truncation. Arithmetic on operands in this canonical encoding is
essentially standard 2's~complement arithmetic, but with the added benefit
that negation is a constant time operation, obtained by bit inversion.

To be able to discuss a possible implementation of a floating point
arithmetic unit it is necessary to repeat some material on the fixed point
representation from previous publications. We shall in Section~2 (adapted
from \cite{KM05b}) cite some definitions, however here restricted to the
binary representation. Section~3 citing some material from \cite{KMP11} but
also expanding it, analyzes the relation between RN-representations and
2's~complement representations. Conversion from the latter into the former
is performed by the Booth algorithm, yielding a signed-digit representation
in a straightforward encoding. It is then realized that $n+1$ bits are
sufficient, providing a simple encoding consisting of the $n$ bits of the
2's~complement encoding with a round-bit appended, yielding the {\it
  canonical} encoding. To be able to define arithmetic operations directly
on the canonically encoded numbers it turns out to be useful to interpret
them as intervals, reflecting the sign of what has possibly been rounded
away by truncation.

Section~4 then presents implementations of addition, multiplication and
division on fixed-point, canonically encoded RN-represented numbers, to
some extent repeating material from \cite{KMP11}, but also expanding the
descriptions on multiplication and division. It is noticed that these
implementations are essentially identical to standard 2's~complement
arithmetic. After introducing a floating point representation and encodings
similar to the IEEE-754 formats, Section~5 shows that multiplication and
division on such floating point operands can be defined in a
straightforward way based on the fixed-point algorithms. Addition and
subtraction is discussed in some detail, split in the traditional ``near''
and ``far'' cases. Directed roundings are are shown realizable based on a
``sticky-bit'', but without the need for a rounding
incrementation. Section~6 finally concludes the paper.

\section{Binary RN-representations}

For an introduction to the general class of RN-representations for odd and
even radix we refer the reader to \cite{KM05b}. Here we are concentrating
on the case of radix 2 over the digit-set $\{-1,0,1\}$, with the
restriction that the signs of non-zero digits alternate.

\begin{definition}[Binary RN-representation]\label{RN.def}~\\
  The digit sequence $D=d_n d_{n-1}d_{n-2}\cdots$ (with $-1\leq d_i \leq
  1$) is a binary RN-representation of $x$ iff
\begin{enumerate}
\item $x = \sum_{i=-\infty}^{n} d_i\beta^i$ (that is $D$ is a binary
  representation of $x$);
\item for any $j \leq n$,
  \[
  \left|\sum_{i=-\infty}^{j-1}d_i2^i\right|\leq\frac12 2^{j}, 
  \]
  that is, if the digit sequence is truncated to the right at any position
  $j$, the remaining sequence is always the number (or one of the two
  members in case of a tie) of the form
  $d_nd_{n-1}d_{n-2}d_{n-3}\ldots{}d_j$ that is closest to $x$.
\end{enumerate}
\end{definition}

Hence, truncating the RN-representation of a number at any position is
equivalent to rounding it to the nearest.  Although it is possible to deal
with infinite representations, we shall restrict our discussions to finite
representations, and find for such RN-representations some observations:

\pagebreak[3]
\begin{theorem}[Binary RN-representations]~\\
\label{characterization}
$D=d_m d_{m-1}\cdots d_{\ell}$ is a binary RN-representation iff
  \begin{enumerate}
    \item all digits have absolute value less than or equal 1;
  \item if $|d_i| = 1$, then the first non-zero digit that
    follows on the right has the opposite sign, that is, the largest $j <
    i$ such that $d_j \neq 0$ satisfies $d_i \times d_j < 0$,
  \end{enumerate}
  with some numbers having two finite representations, where one has its
  least significant nonzero digit equal to $1$, the other
  one has its least significant nonzero digit equal to $-1$.
\end{theorem}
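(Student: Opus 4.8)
The plan is to prove the two directions of the equivalence separately, then settle the uniqueness/duality statement at the end.

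For the forward direction, suppose $D = d_m d_{m-1}\cdots d_\ell$ is a binary RN-representation in the sense of Definition~\ref{RN.def}. Condition~1 of the theorem is immediate from the requirement $-1 \le d_i \le 1$. For condition~2, I would argue by contradiction: suppose some $d_i = 1$ (the case $d_i = -1$ is symmetric by negating all digits, which preserves the RN property) and the next nonzero digit $d_j$ to its right also equals $+1$, with all intermediate digits zero. Then I would look at the truncation inequality at position $j$, i.e. the bound $\left|\sum_{k=\ell}^{j-1} d_k 2^k\right| \le \tfrac12 2^j$. The idea is that the "tail" from position $j$ downward is forced to be small, but then reconstructing the partial sum $\sum_{k=j}^{i} d_k 2^k = 2^i + 2^j$ and comparing with what the truncation at position $i$ allows gives a violation: having two same-sign digits at positions $i$ and $j$ with nothing between them makes the tail below position $i$ too large to satisfy the bound at some position between $j$ and $i$ (or at $i$ itself). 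Concretely, the tail $\sum_{k \le j} d_k 2^k$ must have absolute value $\le 2^{j-1}$ by the bound at $j$, but it also must be $\ge$ something forcing $d_j$'s contribution, and chaining the bounds at positions $i, i-1, \dots, j$ pins down the alternation.

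For the converse direction, assume $D$ satisfies conditions 1 and 2 of the theorem (all digits in $\{-1,0,1\}$, and nonzero digits alternate in sign). I need to verify the truncation bound of Definition~\ref{RN.def} at every position $j$. Fix $j$ and consider the tail $T = \sum_{i=\ell}^{j-1} d_i 2^i$. Since the nonzero digits alternate in sign, the leading nonzero digit of the tail (say at position $p < j$, with value $\pm 1$) dominates: write $T = \pm 2^p + (\text{rest})$, where the "rest" is itself an RN-style tail whose leading nonzero digit has the opposite sign. By an inductive/telescoping estimate on alternating-sign binary digit strings, $|T| \le 2^p \le 2^{j-1}$, which is exactly the required bound $|T| \le \tfrac12 2^j$. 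The cleanest way to package this is a short lemma: if $e_p e_{p-1} \cdots e_\ell$ is a binary signed-digit string with digits in $\{-1,0,1\}$ and alternating nonzero signs, and its most significant digit is nonzero, then the value lies in the closed interval $[0, 2^p]$ or $[-2^p, 0]$ according to the sign of $e_p$; this follows by induction on the length, peeling off the top digit.

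For the uniqueness/duality claim, I would note that a number has a finite RN-representation ending in a nonzero digit exactly when it is a dyadic rational, and the two representations arise from the identity $\cdots d\, 1\, \underbrace{(-1)(-1)\cdots(-1)}_{} \leftrightarrow \cdots d\, 0\, 1\, \underbrace{0\,0\cdots 0}_{}$ — more precisely, using $2^k = 2^{k+1} - 2^k$ one rewrites a trailing $+1$ (at the least significant nonzero position) as a carry that flips to a trailing $-1$ one position up, or vice versa, e.g. $0.1 = 0.1$ versus $1.\bar 1$ is too coarse, so instead I would exhibit the local rewrite $\ldots a\,1 = \ldots (a{+}1)\,\bar 1$ valid when $a \in \{-1,0\}$ so the new digit stays in range and alternation is preserved. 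Checking that both sides remain valid RN-representations (conditions 1 and 2) and represent the same value is a routine finite verification, and that no third finite representation exists follows because fixing the value and the exponent range determines all higher digits once the parity at the bottom is chosen. The main obstacle I anticipate is getting the forward direction's contradiction argument airtight: one must be careful that the violation of the truncation bound is extracted at the \emph{right} position (it may be at $i$, at $j$, or strictly between), so I would phrase it as: among all positions in $[\,j, i\,]$ there is one where the partial tail exceeds half a ulp, using that the contribution $2^i + 2^j$ cannot be cancelled by the bounded lower tail.
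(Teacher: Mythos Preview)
The paper does not actually prove Theorem~\ref{characterization}: it is stated as a known characterization (imported from \cite{KM05b}), and the only part argued in the text is the final duality clause, via the local rewrite ``if the last two digits are $d\,1$, replace them by $(d{+}1)(-1)$''. That is exactly the rewrite you propose for the duality part, so on the one piece the paper does address, your approach coincides with it.

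Your converse direction is sound: if the nonzero digits alternate in sign, the tail below position $j$ is an alternating sum $\pm(2^{p_1}-2^{p_2}+2^{p_3}-\cdots)$ with $p_1>p_2>\cdots$, hence lies in $[-2^{p_1},2^{p_1}]\subseteq[-2^{j-1},2^{j-1}]$, which is the required bound.

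Your forward direction, however, locates the contradiction in the wrong place. Suppose $d_i=d_j=1$ with $j<i$ and all digits strictly between equal to~$0$, and write $T=\sum_{k<j}d_k2^k$, so $|T|\le 2^{j-1}$ by the bound at position~$j$. For any $k$ with $j<k\le i$ the tail below $k$ equals $2^j+T$; at $k\ge j+2$ one has $|2^j+T|\le 3\cdot 2^{j-1}\le 2^{k-1}$, and at $k=j+1$ the requirement $|2^j+T|\le 2^j$ is \emph{not} violated whenever $T\le 0$, which nothing in your setup excludes. So no contradiction is forced ``at some position between $j$ and $i$ (or at $i$ itself)''. The violation actually occurs one step \emph{above} $d_i$: the tail below position $i+1$ is $2^i+2^j+T\ge 2^i+2^{j-1}>2^i=\tfrac12\,2^{i+1}$, contradicting Definition~\ref{RN.def} at $j=i+1$. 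You should therefore look above $d_i$, not below it; when $i=m$ this step needs the string to be read with an implicit leading zero, which is the intended convention.
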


A number whose finite representation has its last nonzero digit equal to
$1$ has an alternative representation ending with $-1$. Just assume the
last two digits e.g.\ are $d1$: since the representation is an
RN-representation, if we replace these two digits by $(d+1)(-1)$ we still
have a valid RN-representation. This has an interesting consequence:
truncating a number which is a tie will round either way, depending on
which of the two possible representations the number happens to have. Note
that when a rounding has taken place, the sign of a non-zero part
rounded away will have the opposite sign of the last non-zero digit, thus
the representation carries information about what was rounded away, thus
effectively halving the error bound on the result. We shall see below that
this information may be utilized in subsequent calculations.

This rounding rule is thus different from the ``round-to-nearest-even''
rule required by the IEEE floating point standard \cite{IEEE08}. Both
roundings provide a ``round-to-nearest'' in the case of a tie, but employ
different rules when choosing which way to round in this situation. But
note that the direction of rounding in general depends on how the value to
be rounded was derived, as the representation of the value in the tie
situation is determined by the sequence of operations leading to the
value. However, when employing the canonical encoding based on a
2's~complement encoding, and the implementation of the basic arithmetic
operations later, then the rounding in the tie-situations is deterministic.

% An important property of the RN-representation is that it avoids the {\em
%   double rounding} problem occurring with some rounding methods, e.g., with
% the standard IEEE round-to-nearest-even when operations are first performed
% in some extended format. This may happen when the result of first rounding
% to a position $j$, followed by rounding to position $k$, does not always
% yield the same result as if directly rounding to position $k$, as also
% discussed in \cite{Lee89}. We repeat from \cite{KM05b} the following
% result:

% \begin{observation}[Double rounding]
%   \label{obs-dblround}~\\
%   Let $\rn_i(x)$ be the function that rounds the value of $x$ to nearest by
%   truncation at position $i$ of weight $\beta^i$. Then for $k>j$, if~$x$ is
%   represented in the RN-representation, then
%   \[
%   \rn_k(x)=\rn_k(\rn_j(x))
%   \]
% \end{observation}

%\newpage
\section{Encoding Binary RN-represented Numbers}

Here we briefly cite for completeness from \cite{KMP11} the definition and
some properties of the canonical binary representation and its encoding. 
Consider a value $x=-b_m2^{m}+\sum_{i=\ell}^{m-1} b_i2^i$ in
2's~complement representation:
\[
x \sim b_m b_{m-1}\cdots b_{\ell+1}b_\ell
\]
with $b_i\in\{0,1\}$ and $m>\ell$. Then the digit string
\[
\delta_m\delta_{m-1}\cdots 
\delta_{\ell+1}\delta_\ell\quad\mbox{with}\quad \delta_i\in\{-1,0,1\}
\]
defined (by the Booth recoding \cite{Boo51}) for $i=\ell,\cdots,m$ as
\begin{equation}\label{eq-delta-def}
\delta_i=b_{i-1}-b_i\quad\mbox{(with $b_{\ell-1}=0$ by convention)}
\end{equation}
is an RN-representation of $x$ with $\delta_i\in\{-1,0,1\}$. That it
represents the same value follows trivially by observing that the converted
string represents the value $2x-x$. The alternation of the signs of
non-zero digits is easily seen by considering how strings of the form
$011\cdots10$ and $100\cdots01$ are converted.

Hence the digits of the 2's~complement representation directly provides an
encoding of the converted digits as a tuple: $\delta_i\sim(b_{i-1},b_i)$
for $i=\ell,\cdots,m$ where
\begin{equation}\label{bs-encoding}
\begin{array}{r@{\;\sim\;}l}
-1 & (0,1)\\
 0 & (0,0)\mbox{ or }(1,1)\\
 1 & (1,0),
\end{array}
\end{equation}
where the value of the digit is the difference between the first and the
second component.

\begin{example}
  Let $x=110100110010$ be a sign-extended 2's~complement number and write
  the digits of $2x$ above the digits of $x$:
  \[\renewcommand{\arraystretch}{1.3}\small
  \begin{array}{|r||*{13}{c|}}\hline
  2x              &1&    0&1&    0&0&1&1&    0&0&1&    0&0\\\hline
  x               &1&    1&0&    1&0&0&1&    1&0&0&    1&0\\\hline\hline
  \mbox{RN-repr. }x& &\bar1&1&\bar1&0&1&0&\bar1&0&1&\bar1&0\\\hline
\end{array}
\]
where it is seen that in any column the two upper-most bits provide the
encoding defined above of the signed-digit below in the column. Since the
digit in position $m\!+\!1$ will always be $0$, there is no need to include
the most significant position otherwise found in the two top rows.
\end{example}

If $x$ is non-zero and $b_k$ is the least significant non-zero bit of the
2's~complement representation of $x$, then $\delta_k=-1$, confirmed in the
example, hence the last non-zero digit is always $-1$ and thus unique.
However, if an RN-represented number is truncated for rounding somewhere,
the resulting representation may have its last non-zero digit of value $1$.

As mentioned in Theorem~\ref{characterization} there are exactly two finite
binary RN-representations of any non-zero binary number of the form $a2^k$
for integral $a$ and $k$, hence requiring a specific sign of the last
non-zero digit would make the representation unique.  On the other hand
without this requirement, rounding by truncation of the 2's~complement
encoding also makes the rounding deterministic and furthermore unbiased in
the tie-situation, by rounding up or down, depending on the sign of the
digit rounded away.

\begin{example}
  Rounding the value of $x$ in the previous example by truncating off the
  two least significant digits we obtain
  \[\renewcommand{\arraystretch}{1.3}\small
  \begin{array}{|r||*{12}{c|}}\hline
  \tr_2(2x)                 &1&    0&1&    0&0&1&1&    0&0&{\bf1}\\\hline
  \tr_2(x)                  &1&    1&0&    1&0&0&1&    1&0&0\\ \hline\hline
  \mbox{RN-repr. }\rn_2(x)& &\bar1&1&\bar1&0&1&0&\bar1&0&1\\\hline
\end{array}
\]
where it is noted that the bit of value 1 in the upper rightmost corner (in
boldface) acts as a round bit, carrying information about the part rounded
away by truncation (which here happened to be a tie situation).
\end{example}

The example shows that there is very compact encoding of RN-represented
numbers derived directly from the 2's~complement representation, noting in
the example that the upper row need not be part of the encoding, except for
the round-bit. We will denote it the {\em canonical encoding}, and note
that it is a kind of ``carry-save'' in the sense that it contains a bit not
yet added in.

\begin{definition}[Binary canonical RN-encoding]
  \label{def-canon}~\\
  Let a number $x$ be given in 2's~complement representation as the bit
  string $b_m\cdots{}b_{\ell+1}b_\ell$, such that
  $x=-b_{m}2^{m}+\sum_{i=\ell}^{m-1} b_i2^i$.  Then the {\em binary canonical
    encoding} of the RN-representation of $x$ is defined as the pair
  \[
  x\sim(b_m b_{m-1}\cdots{}b_{\ell+1}b_\ell,\mbox{r})
          \;\mbox{\rm where the round-bit is }\mbox{r}=0
  \]
  and after truncation at position $k$, for $m\ge k>\ell$
  \[
  \rn_k(x)\sim(b_m b_{m-1}\cdots{}b_{k+1}b_k,r)
          \;\mbox{\rm with round-bit}\;r=b_{k-1}.
  \]
\end{definition}

The signed-digit interpretation is available with digits $\delta_i$ from
the canonical encoding by pairing bits using the encoding
(\ref{bs-encoding}), $(b_{i-1},b_i)$ as $\delta_i=b_{i-1}-b_i$ for $i>k$,
and $({r},b_k)$ with $\delta_k=r-b_k$, when truncated at position~$k$.

The fundamental idea of the canonical radix-2 RN-encoding is that it is a
binary encoding of a value represented in the signed digit set
$\{-1,0,1\}$, where the non-zero digits alternate in sign.  Using this
encoding of such numbers employing 2's complement representation in the
form $(a,r_a)$ it is seen that it then represents the value
\[
             (2a + r_au) - a = a + r_au,
\]
where $u$ is the weight of the least significant position of $a$. Note that
there is then no difference between $(a,1)$ and $(a+u,0)$, both being
RN-representations of the same value: 
\[
\forall a, \val(a,1) = \val(a+u,0),
\] 
where we use the notation $\val(x,r_x)$ to denote the value of an
RN-represented number. 

If $(x,r_x)$ is the binary canonical encoding of $X=\val(x,r_x)=x+r_xu$
then it follows that
\[
-X=-x-r_xu=\bar{x}+u-r_xu=\bar{x}+(1-r_x)u=\bar{x}+\bar{r}_xu,
\]
which can also be seen directly from the encoding of the negated signed
digit representation.
\begin{observation}
  If $(x,r_x)$ is the canonical RN-encoding of a value $X$, then
  $(\bar{x},\bar{r}_x)$ is the canonical RN-encoding of $-X$, where
  $\bar{x}$ is the 1's~complement of $x$. Hence negation of a canonically
  encoded value is a constant time operation.
\end{observation}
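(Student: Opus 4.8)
The plan is to prove the Observation by a one-line computation at the level of values, followed by a short check that the resulting pair is a \emph{bona fide} canonical encoding and not merely a pair with the right value. I would use the two facts already recorded above: that $\val(x,r_x)=x+r_xu$, where $u$ is the weight of the least significant position of $x$, and that in $2$'s~complement the negative of the bit string $x$ is $\bar x+u$ (one's complement plus one ulp). Then $-X=-(x+r_xu)=(\bar x+u)-r_xu=\bar x+(1-r_x)u$, and since $r_x\in\{0,1\}$ we have $1-r_x=\bar r_x$, so $-X=\bar x+\bar r_xu=\val(\bar x,\bar r_x)$.

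Next I would argue that $(\bar x,\bar r_x)$ is a canonical RN-encoding in the sense of Definition~\ref{def-canon}, not just a pair evaluating to $-X$. The cleanest route is the signed-digit picture: complementing every bit $b_i$ replaces each encoding tuple $(b_{i-1},b_i)$ from (\ref{bs-encoding}) by $(\bar b_{i-1},\bar b_i)$, which encodes the digit $\bar b_{i-1}-\bar b_i=b_i-b_{i-1}=-\delta_i$. The map $\delta_i\mapsto-\delta_i$ keeps all digits in $\{-1,0,1\}$ and preserves the alternation-of-signs condition of Theorem~\ref{characterization}, while negating the represented value; hence the complemented string is an RN-representation of $-X$, and reading off its $2$'s~complement part and round bit gives exactly $(\bar x,\bar r_x)$. (Alternatively one can invoke directly that every $(\text{2's-complement string},\text{round-bit})$ pair is the canonical encoding of its value, which is the content of the remark that $(a,1)$ and $(a+u,0)$ are the two canonical encodings of one value.)

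Finally, for the complexity statement I would note that computing $\bar x$ and $\bar r_x$ is bitwise inversion with no carry propagation, hence realizable by a constant-depth circuit, whereas negating an ordinary $n$-bit $2$'s~complement number requires an increment and therefore a carry chain of length $\Theta(\log n)$ in the usual circuit models.

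I do not expect a genuine obstacle here: the value computation is immediate, and the only point demanding a little care is the upgrade from ``$(\bar x,\bar r_x)$ has value $-X$'' to ``$(\bar x,\bar r_x)$ is the canonical RN-encoding of $-X$'', which is why I would route that step through the signed-digit interpretation rather than leave it implicit.
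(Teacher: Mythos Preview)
Your proposal is correct and follows essentially the same approach as the paper: the paper gives precisely your one-line value computation $-X=\bar x+(1-r_x)u=\bar x+\bar r_xu$ immediately before stating the Observation, and remarks that this ``can also be seen directly from the encoding of the negated signed digit representation,'' which is exactly the signed-digit check you spell out. You are simply more explicit about that second step than the paper is.
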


It is important to note that although from a ``value perspective'' the
representation is redundant ($\val(a,1)=\val(a+u,0)$), it is not so when
considering the signed-digit representation. In this interpretation the
sign of the least significant digit carries information about the sign of
the part which possibly has been rounded away.
\begin{lemma}
\label{tail-sign}
Provided that a RN-represented number with canonical encoding $(a,r_a)$ is
non-zero, then $r_a=1$ implies that the least significant non-zero digit in
its signed-digit representation is~$1$ (the number was possibly rounded
up), and $r_a=0$ implies it is~$-1$ (the number was possibly rounded down).
\end{lemma}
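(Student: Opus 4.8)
The plan is to work directly from the canonical encoding $(a,r_a)$ and its signed-digit interpretation, as spelled out in the paragraph following Definition~\ref{def-canon}. Write $a \sim b_m b_{m-1}\cdots b_{k+1}b_k$ for the 2's~complement string (here $k$ is the least significant position, so $u = 2^k$), and recall that the associated signed digits are $\delta_i = b_{i-1} - b_i$ for $i > k$, together with the least significant digit $\delta_k = r_a - b_k$. The key structural fact I would invoke is Theorem~\ref{characterization}: in any binary RN-representation the nonzero digits strictly alternate in sign. Hence to identify the sign of the \emph{least significant} nonzero digit, it suffices to locate the first position, scanning from the right, where $\delta_i \neq 0$, and determine its sign.

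First I would dispose of the case $r_a = 1$. Then $\delta_k = 1 - b_k \geq 0$, so either $\delta_k = 1$ (when $b_k = 0$), in which case $\delta_k$ itself is the least significant nonzero digit and it equals $1$, as claimed; or $\delta_k = 0$ (when $b_k = 1$). In the latter subcase I would argue that $\delta_k$ contributes nothing and we must look leftward; but since all the digits $\delta_i$ for $i>k$ come from a genuine 2's~complement string, the first nonzero one among them is $-1$ (this is exactly the observation made in the text: the least significant nonzero digit coming out of the Booth recoding of a 2's~complement number is $-1$). Wait — that would give sign $-1$, contradicting the claim, so this subcase must actually be impossible, or rather the ``$b_k = 1, r_a = 1$'' configuration must be reduced first. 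The cleaner route, which I would adopt, is to normalize using $\val(a,1) = \val(a+u,0)$: if $b_k = 1$ and $r_a = 1$, replace $(a,1)$ by $(a+u,0)$, which shifts a carry leftward and strictly decreases the number of trailing $1$'s in the encoding, so after finitely many such steps we reach an encoding with $r_a = 0$, reducing to the other case; conversely the genuinely ``$r_a=1$'' situations are those with $b_k = 0$, where $\delta_k = 1$ directly. So the argument splits: establish that $r_a = 1$ forces (after the trivial normalization) $\delta_k = 1$.

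For the case $r_a = 0$: then $\delta_k = -b_k$, so $\delta_k = -1$ when $b_k = 1$, and $\delta_k = 0$ when $b_k = 0$. In the first subcase we are done. In the second, $b_k = 0$ means the 2's~complement string has a trailing zero; I would peel it off and induct on the length of the string, the base case being a string that is identically zero (excluded by the hypothesis that the number is nonzero). Each peeling step preserves $r_a = 0$ and the value, so the induction goes through, and by the text's remark the least significant nonzero $\delta_i$ is $-1$.

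The main obstacle I anticipate is handling the two-representations phenomenon cleanly: a number of the form $a2^k$ has encodings with trailing digit $1$ and with trailing digit $-1$, and one must be careful that the lemma is a statement about a \emph{fixed} canonical encoding $(a,r_a)$, not about the value. The $r_a$-bit is precisely what disambiguates, so the real content is the bookkeeping that ties the parity/value of the trailing bits $b_k, r_a$ to the sign of $\delta_k$ via $\delta_k = r_a - b_k \in \{-1,0,1\}$, combined with alternation to propagate the conclusion when $\delta_k = 0$. I would make sure the normalization $(a,1)\mapsto(a+u,0)$ is invoked only to reduce bookkeeping and does not change which value (hence which ``rounded-away sign'') we are talking about.
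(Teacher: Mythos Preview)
Your handling of the subcase $r_a=1$, $b_k=1$ is where the argument breaks down, and in two places.

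First, the claim that ``the first nonzero one among the $\delta_i$ for $i>k$ is $-1$'' is a misapplication of the text's observation about Booth recoding. That observation needs the virtual bit below the least significant position to be $0$; but here $\delta_{k+1}=b_k-b_{k+1}$ with $b_k=1$, so the situation is exactly the opposite. Concretely, with $b_k=1$, let $j\ge k$ be the least index with $b_j=0$ (such $j$ exists since otherwise $a=-u$ and $\val(a,1)=0$). Then $\delta_k=r_a-b_k=0$, $\delta_{k+1}=\cdots=\delta_{j-1}=0$, and $\delta_j=b_{j-1}-b_j=1-0=1$. So the least significant nonzero digit is $+1$ after all, and there was never a contradiction to repair. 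This direct trailing-run computation is precisely what the paper's pictorial proof is encoding.

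Second, and more seriously, your proposed repair via ``normalization'' $(a,1)\mapsto(a+u,0)$ is not valid for this lemma. The lemma is a statement about the signed-digit string attached to the \emph{given} encoding $(a,r_a)$; passing to $(a+u,0)$ changes that string. Indeed, by the very case you handle correctly, the encoding $(a+u,0)$ has least significant nonzero digit $-1$, whereas what you must show is that $(a,1)$ has least significant nonzero digit $+1$. So the reduction lands you in the wrong conclusion, not the right one. You even flag this tension in your final paragraph, but the normalization step ignores it.

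The fix is simply to drop the normalization detour and argue symmetrically in the two cases: for $r_a=1$, scan left past the trailing run of $1$'s in $a$ to the first $0$; for $r_a=0$, past the trailing run of $0$'s to the first $1$. In each case the digit at the transition is $r_a-\bar r_a=\pm1$ with the claimed sign, and the nonzero hypothesis guarantees the transition occurs.
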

\begin{proof}
  The result is easily seen when listing the trailing bits of the
  2's~complement representation of $2a+r_a$ (with $r_a$ in boldface) above
  those of $a$ together with the signed-digit representation:
\[\setlength{\arraycolsep}{2pt}
\renewcommand{\arraystretch}{1.2}
\begin{array}{ccccccc}
\dots &0&1&1&\dots&\bf1\\ 
\dots &.&0&1&\dots&1\\\hline
\dots &.&1&0&\dots&0\\ 
\end{array}\qquad
\begin{array}{ccccccc}
\dots &1&0&0&\dots&\bf0\\ 
\dots &.&1&0&\dots&0\\\hline
\dots &.&\bar1&0&\dots&0
\end{array}
\]\\[-4ex]
\end{proof}

\subsection{The Range of $p$-bit Canonically Encoded Numbers}

With $p+1$ bits in the tuple $(a,r_a)$, where $a$ is a $p$-bit
2's~complement representation paired with the round-bit $r_a$, it is just a
non-redundant encoding of a value represented by $p$ digits over the digit
set $\{-1,0,1\}$. Assuming that the radix point is at the rightmost end,
i.e., the numbers represent integer values, then the maximal value
representable in $p$ digits in $\{-1,0,1\}$ is $10\cdots0\sim2^{p-1}$ and
the minimal value is $\bar10\cdots0\sim-2^{p-1}$, hence the range is
sign-symmetric. The corresponding canonical encodings employing
2's~complement are $(011\cdots1,1)\sim(2^{p-1}-1,1)$ and
$(100\cdots0,0)\sim(-2^{p-1},0)$ respectively.

\subsection{An Alternative Interpretation}

But we may also interpret the representation $(a,r_a)$ as an interval
$\inv(a,r_a)$ of length $u/2$:
% \[
% \inv(a,1) = \left[a+\txt\frac{u}{2}\;;\; a+u\right],\;
% \txt\inv(a+u,0) = \left[a+u\;;\; a+\txt\frac{3u}{2}\right]
% \]
% or
\begin{equation}
\txt\label{interval-interp}
\inv(a,r_a)=\left[a+r_a\frac{u}2\;;\;a+(1+r_a)\frac{u}2\right],
\end{equation}
when interpreting it as an interval according to what may have been thrown
away when rounding by truncation. For a detailed discussion of this
interpretation see \cite{KMP11}.

Hence even though $(a,1)$ and $(a+u,0)$ represent the same value $a+u$, as
intervals they are essentially disjoint, except for sharing a single
point. In general we may express the interval interpretation as pictured in
Fig.~\ref{fig.RN-intervals}
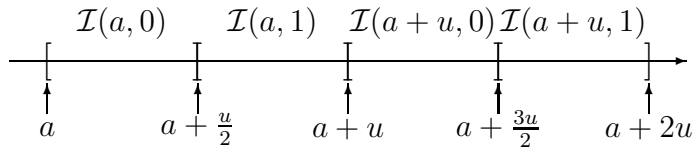
\begin{figure}[h]
\begin{center}
\setlength{\unitlength}{1mm}
\begin{picture}(80,10)(0,6)
\put(-5,11){\vector(1,0){90}}
\put(0,4){\vector(0,1){4}}\put(0,2){\mb{$a$}}
\put(20,4){\vector(0,1){4}}\put(20,2){\mb{$a+\frac{u}2$}}
\put(40,4){\vector(0,1){4}}\put(40,2){\mb{$a+u$}}
\put(60,4){\vector(0,1){4}}\put(60,2){\mb{$a+\frac{3u}2$}}
\put(80,4){\vector(0,1){4}}\put(80,2){\mb{$a+2u$}}
{\large
\put(0,11){\mb{[}}
\put(20,11){\mb{]}}
\put(20,11){\mb{[}}
\put(40,11){\mb{]}}
\put(40,11){\mb{[}}
\put(60,11){\mb{]}}
\put(60,11){\mb{[}}
\put(80,11){\mb{]}}
}
\put(10,16){\mb{$\inv(a,0)$}} 
\put(30,16){\mb{$\inv(a,1)$}}
\put(50,16){\mb{$\inv(a+u,0)$}}
\put(70,16){\mb{$\inv(a+u,1)$}}
\end{picture}
\end{center}
\caption{Binary Canonical RN-representations as Intervals}
\label{fig.RN-intervals}
\end{figure}

We do not intend to define an interval arithmetic, but only require that
the interval representation of the result of an arithmetic operation
$\odot$ satisfies\footnote{Note that this is the reverse inclusion of
that required for ordinary interval arithmetic}%, e.g.~\cite{Moo63}.}.

\begin{equation}\label{int-condition}
\inv(A \odot B) \subseteq\inv(A)\odot\inv(B)=\{a \odot b|a\in A, b\in B\}.
\end{equation}

\section{Arithmetic Operations on RN-Represented Values}

We will here briefly summarize from \cite{KMP11} the realization of
addition and multiplication on fixed-point representations for fixed value
of $u$, but expand on the implementation of multiplication and division.
We want to operate directly on the components of the encoding $(a,r_a)$,
not on the signed-digit representation.

\subsection{Addition of RN-Represented Values}

Employing the value interpretation of encoded operands $(a,r_a)$ and
$(b,r_b)$ we have for addition:
\[
\begin{array}{rcl}
        \val(a,r_a) &=& a + r_au \\
   + \; \val(b,r_b) &=& b + r_bu \\ \hline
\val(a,r_a)+\val(b,r_b) &=& a+b+(r_a+r_b)u\\
%%                &=& 
\end{array}\label{xx}
\]

The resulting value has two possible representations, depending on the
choice of the rounding bit of the result. To determine what the rounding
bit of the result should be, we consider interval interpretations
(\ref{interval-interp}) of the two possible representations of the result.

To define the addition operator $\oplus$ on canonical encodings, we want
$\inv((a, r_a) \oplus (b,r_b)) \subseteq \inv(a, r_a) + \inv(b, r_b)$, and
$(a,r_a) \oplus (0,0) = (a,r_a)$, hence in order to keep addition
symmetric, we define addition of RN encoded numbers as follows.

\begin{definition}[Addition]
If $u$ is the unit in the last place of the operands, let:
\[
(a,r_a)\oplus(b,r_b) = ((a+b+(r_a\wedge r_b)u),r_a\vee r_b)
\]
where $r_a\wedge r_b$ may be used as carry-in to the 2's~complement
addition.\label{add-def}
\end{definition}

Recalling that $-(x, r_x) = (\bar{x}, \bar{r_x})$, we observe that using
Definition~\ref{add-def} for subtraction yields $(x,r_x) \ominus (x,r_x)
= (-u, 1)$, with $\val(-u, 1) = 0$. It is possible alternatively to define
addition on RN-encoded numbers as $(a, r_a) \oplus_2 (b, r_b) =
((a+b+(r_a\vee r_b)u),r_a\wedge r_b)$. Using this definition, $(x, r_x)
\ominus_2 (x, r_x) = (0,0)$, but then the neutral element for addition is
$(-u, 1)$, i.e., $(x, r_x) \oplus_2 (-u, 1) =(x,r_x)$.  

\subsection{Multiplying  RN-Represented Values}

By definition we have for the value of the product
\[
\begin{array}{rcl}
        \val(a,r_a) &=& a + r_au \\
        \val(b,r_b) &=& b + r_bu \\ \hline
\val(a,r_a)\val(b,r_b)% &=& (a+r_au)(b+r_bu) \\
                &=& ab + (ar_b+br_a)u +r_ar_bu^2,
\end{array}
\]
noting that the unit of the result is $u^2$, assuming that $u\le 1$.  Using
the interval interpretation it turns out (for details see \cite{KMP11})
that we do not get proper interval inclusions for all sign combinations.
However, since negation of canonical (2's~complement) RN-encoded values can
be obtained by constant-time bit inversion, multiplication of such operands
can be realized by multiplication of the absolute values of the operands,
the result being supplied with the correct sign by a conditional inversion.
Thus employing bit-wise inversions, multiplication in canonical RN-encoding
may be handled like sign-magnitude multiplication, hence assuming that both
operands are non-negative:

\begin{definition}[Multiplication]
If $u$ is the unit in the last place, with $u\leq 1$, we define for
non-negative operands:
\[
(a,r_a)\otimes(b,r_b)=\left(ab+u(ar_b+br_a)%-u^2(r_a\oplus r_b)
,r_a \wedge r_b\right),
\]
and for general operands by appropriate sign inversions of the operands and
result. If $u < 1$ the unit is $u^2 < u$ and the result may often have
to be rounded to unit $u$, which can be done by truncation.
\end{definition}

The product can be returned as $(p,r_p)$ with
$p=ab+u(ar_b+br_a)=a(b+r_bu)+br_au$, where the terms of $br_au$ may be
consolidated into the array of partial products.

\begin{example}
  For a 5-bit integer example let $(a,r_a)=(a_4a_3a_2a_1a_0,r_a)$ and
  $(b,r_b)=(b_4b_3b_2b_1b_0,r_b)$, or in signed-digit $b=d_4d_3d_2d_1d_0$,
  $d_i\in\{-1,0,1\}$, we note that $a_4=b_4=0$ since $a\ge0$ and
  $b\ge0$. It is then possible to consolidate the terms of $br_au$ (shown
  framed) into the array of partial products:\footnote{not showing the
    possible rewriting of negative partial products, requiring an
    additional row.}
\[\setlength{\arraycolsep}{2pt}
\begin{array}{ccccccccc|c}
   &   &   &   &   &    a_3  &   a_2  &   a_1  &  a_0  &\\\hline
   &   &   &   &    & a_3d_0 & a_2d_0 & a_1d_0 & a_0d_0 & d_0\\
   &   &   &   & a_3d_1 & a_2d_1 & a_1d_1 & a_0d_1 &\fbox{$b_0r_a$}& d_1\\
   &   &   & a_3d_2 & a_2d_2 & a_1d_2 & a_0d_2 &\fbox{$b_1r_a$}&    & d_2\\
   &   & a_3d_3 & a_2d_3 & a_1d_3 & a_0d_3 &\fbox{$b_2r_a$}&   &    & d_3\\
   & a_3d_4 & a_2d_4 & a_1d_4 & a_0d_4 &\fbox{$b_3r_a$} &  & &  & d_4\\[3pt]\hline
 p_8\;\,&  p_7 &    p_6   &   p_5  &  p_4   &  p_3  &  p_2 & p_1 & p_0
\end{array}
\]
thus the product is $(p,r_p)$ with $p=ab+u(ar_b+br_a)=a(b+r_bu)+br_au$ and
$r_p=r_a r_b$.

In particular for $a=(01011,1)$ and $b=(01001,1)\sim 1\bar1010$:
\[\setlength{\arraycolsep}{2pt}
\begin{array}{*{9}{>{\centering}p{3ex}}|r}
   &   &   &   &   & 1 & 0 & 1 & 1 &\\\hline
   &   &   &   &   & 0 & 0 & 0 & 0 & 0\\
   &   &   &   & 1 & 0 & 1 & 1 & \fbox1 & 1\\
   &   &   & 0 & 0 & 0 & 0 & \fbox0 &   & 0\\
   &   &$-1$ & 0 &$-1$ &$-1$ & \fbox0 &   &   & -1\\
   & 1 & 0 & 1 & 1 & \fbox1 &   &   &   & 1\\[1pt]\hline
 0 & 0& 1 & 1 & 1 & 0 & 1 & 1 & 1
\end{array}
\]
hence $(01011,1)\otimes(01001,1)=(001110111,1)$, where we note that
$(001110111,1)$ corresponds to the interval
$\left[01110111.1\;;\;01111000.0\right]$, clearly a subset of the 
interval
\begin{eqnarray*}
[01011.1\times 01001.1\;\;;01100\times 01010]\\
=[01101101.01\;;01111000.00].
\end{eqnarray*}
~\\[-6ex]
\end{example}

Thus multiplication of RN-represented values can be implemented on their
canonical encodings at about the same cost as ordinary 2's~complement
multiplication. Note that when recoding the multiplier into a higher radix
like $4$ and $8$, similar kinds of consolidation may be applied.

\subsection{Dividing RN-Represented Values}

As for multiplication we assume that negative operands have been
sign-inverted, and that the signs are treated separately. Employing our
interval interpretation (\ref{interval-interp}), to satisfy our interval
inclusion condition (\ref{int-condition}) we must require the result
of dividing $(x,r_x)$ by $(y,r_y)$ to be in the interval:
\[
\left[
\frac{x+r_x\frac{u}2}{y+(1+r_y)\frac{u}2} \;;\;
\frac{x+(1+r_x)\frac{u}2}{y+r_y\frac{u}2}
\right],
\]
where it is easily seen that the rational value\footnote{We could also have
  chosen to evaluate the quotient $\frac{x+r_x{u}}{y+r_y{u}}$. However
  dividing $(x,r_x)$ by the neutral element $(1,0)$ would then yield the
  result $(x+r_xu,0)$, whereas with the chosen quotient the result becomes
  $(x,r_x)$. The relative difference between these two expressions
  evaluated to some precision $p$ is at most $\ulp(p)/2$.}
\[
q=\frac{x+r_x\frac{u}2}{y+r_y\frac{u}2}
\]
belongs to that interval. Note that the dividend and divisor to obtain the
quotient $q$ are then constructed simply by appending the round bits to the
2's~complement parts, i.e., simply using the ``extended'' bit-strings as
operands.  To determine the accuracy needed in an approximate quotient
$q'=q+\eps$ consider the requirement
\begin{equation}
\label{div-bounds}
\frac{x+r_x\frac{u}2}{y+(1+r_y)\frac{u}2} 
< q+\eps
<
\frac{x+(1+r_x)\frac{u}2}{y+r_y\frac{u}2}.
\end{equation}

Generally division algorithms require that the operands are scaled, hence
assume that the operands satisfy $1\le x <2$ and $1\le y <2$, implying
$\frac12<q<2$. Furthermore assume that $x$ and $y$ have $p$ fractional
digits, so $u=2^{-p}$. To find sufficient bounds on the error $\eps$ in
(\ref{div-bounds}) consider first for $\eps\ge0$ the right bound. Here we
must require
\[\txt
(x+r_x\frac{u}2)+\eps(y+r_y\frac{u}2) < x+(1+r_x)\frac{u}2
\quad\mbox{or}\quad
\eps(y+r_y\frac{u}2) < \frac{u}2,
\]
which is satisfied for $\eps<\frac{u}4$, since $y+r_y\frac{u}2<2$. For the
other bound (for negative $\eps$) we must require
\[
\frac{x+r_x\frac{u}2}{y+(1+r_y)\frac{u}2}<
\frac{x+r_x\frac{u}2}{y+r_y\frac{u}2}+\eps
\]
or
\[\txt
-\eps(y+(1+r_y)\frac{u}2) < (x+r_x\frac{u}2)\frac{u}2,
\]
which is satisfied for $-\eps<\frac{u}4$, since $x\ge1$ and $y+u\le2$.

Hence $|\eps|<\frac{u}4$ assures that (\ref{div-bounds}) is satisfied, and
any standard division algorithm may be used to develop a binary
approximation to $q$ with $p+2$ fractional bits, $x=q'y+r$ with
$|r|<y2^{-p-2}$. Note that since $q$ may be less than 1, a left shift may
be required to deliver a $p+1$ signed-digit result, the same number of
digits as in the operands. Hence a bit of weight $2^{-p-1}$ will be
available as a (preliminary) round bit.

The sign of the remainder determines the sign of the tail beyond the bits
determined. Recall from Lemma~\ref{tail-sign} that when the round bit is~1,
the error is assumed non-positive, and non-negative when the round bit
is~0. If this is not the case then the resulting round bit must be
inverted, hence rounding is also here a constant time operation.

Similarly, function evaluations like squaring, square root and even the
evaluation of ``well behaved'' trans\-cendental functions may be defined
and implemented, just considering canonical RN-represented operands as
2's~complement values with a ``carry-in'' not yet absorbed, possibly using
interval interpretation to define the resulting round bit.

\section{A Floating Point Representation}% and Encoding}

For an implementation of a binary floating point arithmetic unit (FPU) it
is necessary to define an encoding of an operand $(2^em,r_m)$, based on the
canonical encoding of the significand part (say $m$ encoded in $p+1$ bits,
2's~complement), supplied with the round bit $r_m$ and the exponent $e$ in
some biased binary encoding. It is then natural to pack the components into
a computer word (32, 64 or 128 bits), employing the same principles as used
in the IEEE-754 standard \cite{IEEE08} (slightly modified from what was
sketched in \cite{KMP11}). For normal values it is also here possible to
use a ``hidden bit'', noting that the 2's~complement encoding of the
normalized significand will have complementary first and second bits. Thus
representing the leading bit as a (separate) sign bit, the next bit (of
weight $2^0$) need not be represented and can be used as
``hidden-bit''. Hence let $f_m$ be the fractional part of the significand
$m$, assumed to be normalized such that $1\le |m| <2$, and let $s_m$ be the
sign-bit of $m$. The ``hidden bit'' is then the complement $\bar{s}_m$ of
the sign-bit. The components can then be allocated in the fields of a word
as:
\[
\setlength{\unitlength}{0.7mm}
\begin{picture}(120,0)
\put(0,0){\framebox(6,5){$s_m$}}
\put(6,0){\framebox(24,5){$e$}}
\put(30,0){\framebox(83,5){$f_m$}}
\put(113,0){\framebox(7,5){$r_m$}}
\end{picture}
\]
with the round bit in immediate continuation of the significand part. The
exponent $e$ can be represented in biased form as in the IEEE-754
standard. The number of bits allocated to the individual fields may be
chosen as in the different IEEE-754 formats, of course with the combined
$f_m,r_m$ together occupying the fraction field of those formats. The value
of a floating point number encoded this way can then be expressed as:
\[
2^{e-bias}\left([s_m\bar{s}_m.f_1f_2\cdots f_{p-1}]_{2c}+r_m2^{-p-1}\right)
\]
where $f_1,f_2,\cdots,f_{p-1}$ are the (fractional) bits of $f_m$.

Subnormal and exceptional values may be encoded as in the IEEE-754
standard, noting that negative subnormals have leading ones. Observe that
the representation is sign-symmetric and that negation is obtained by
inverting the bits of the significand.

We shall now discuss how the fundamental operations may be implemented on
such floating point RN-representations, not going into details on overflow,
underflow and exceptional values, as these situations can be treated
exactly as known for the standard binary IEEE-754 representation.  Note
again that we want directly to operate on the 2's~complement encoding using
$s_m,f_m,r_m$, not on the signed-digit representation. Right-shifts are
trivial, but for alignment of operands or left normalizing results, we must
investigate how in general we can perform left shifts using the
2's~complement encoding.

Thinking of the value as represented in binary signed-digit, zeroes have to
be shifted in when left shifting.  In our encoding, say for a positive
result $(d,r_d)$ we may have a 2's~complement bit pattern:
\[
 d\sim 0\;0\;\cdots\;0\;1\;b_{k}\cdots b_{p-1}\mbox{ and round bit }r_d
\]
to be left-shifted. Here the least significant signed digit is encoded as
$\left\{{r_d}\atop {b_{p-1}}\right\}$. Zero-valued digits to be shifted in
may then be encoded as $\left\{{r_d}\atop {r_d}\right\}$, as confirmed from
applying the addition rule for obtaining $2\times(x,r_x)$ by
$(x,r_x)\oplus(x,r_x)=(2x+r_xu,r_x)$.

It then follows that shifting in bits of value $r_d$ will precisely achieve
the effect of shifting in zeroes in the signed-digit interpretation:
\[
2^k d\sim 0\;1\;b_{k}\cdots b_{p-1} r_d\cdots r_d\mbox{ with round bit }r_d.
\]

\subsection{Multiplication and Division}

Since the exponents are handled separately, forming the product or quotient
of the significands is precisely as described previously for fixed point
representations: sign-inverting negative operands by bitwise inversion,
forming the product or quotient, possibly normalizing and rounding it, and
supplying it with the proper sign by negating the result if the operands
were of different signs.

\subsection{Addition}

Before addition or subtraction there is in general a need of alignment of
the two operand significands, according to the difference of their
exponents (too large a difference is treated as a special case, see
below). The operand having the larger exponent must be left-shifted, with
appropriate digit values appended at the least significant end, to overlap
with the significand of the smaller operand. In effective subtractions,
after cancellation of leading digits it may be necessary to left-normalize.
Addition is traditionally now handled in an FPU as two cases \cite{Far81a},
where the ``near case'' is dealing with effective subtraction of operands
whose exponents differ by no more than one, where alignment is a constant
time operation.

\subsubsection{Subtraction, the "near case"}

Here a significant cancellation of leading digits may
occur, and thus a variable amount of normalization shifts on the result are
required, handled by shifting in copies of the round-bit.
Figure~\ref{fig-near-path} shows a possible pipelined implementation of
this case, where $\lzd(d)$ is a log-time algorithm for ``leading zeroes
determination'' of the difference (see e.g., \cite{Kor09}) to determine the
necessary normalization shift amount. This determination is based on a
redundant representation of the difference (obtained in constant time by
pairing the aligned operands), taking place in parallel with the
2's~complement subtraction (conversion from redundant to non-redundant
representation). Normalization can then take place on the non-redundant
difference without need for sign inversion.

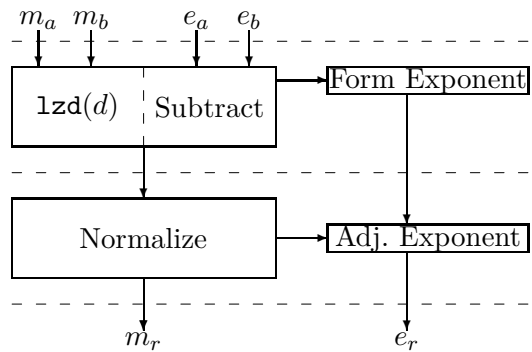
\begin{figure}[hbt]
\begin{center}\small
\setlength{\unitlength}{0.7mm}
\begin{picture}(120,53)(15,50)

\put(35,102){\vector(0,-1){7}}\put(35,104){\mb{$m_a$}}
\put(45,102){\vector(0,-1){7}}\put(45,104){\mb{$m_b$}}

\put(65,102){\vector(0,-1){7}}\put(65,104){\mb{$e_a$}}
\put(75,102){\vector(0,-1){7}}\put(75,104){\mb{$e_b$}}

\dashline{2}(30,100)(128,100)

\put(30,80){\framebox(50,15){}}

\put(55,80){\vector(0,-1){10}}

\put(42.5,87.5){\mb{$\lzd(d)$}}
\put(67.5,87.5){\mb{Subtract}}
\put(55,80){\dashline{2}(0,0)(0,15)}

\put(90,90){\framebox(38,5){Form Exponent}}
\put(80,92.5){\vector(1,0){10}}
\put(105,90){\vector(0,-1){25}}

\dashline{2}(30,75)(128,75)

\put(30,55){\framebox(50,15){Normalize}}
\put(55,55){\vector(0,-1){10}}

\put(90,60){\framebox(38,5){Adj.\ Exponent}}
\put(80,62.5){\vector(1,0){10}}

\put(55,55){\vector(0,-1){10}}\put(55,43){\mb{$m_r$}}

\put(105,60){\vector(0,-1){15}}\put(105,43){\mb{$e_r$}}

\dashline{2}(30,50)(128,50)
\end{picture}
\end{center}

\caption{Near Path, effective subtraction when
  $|e_a-e_b|\le1$}\label{fig-near-path}
\end{figure}
For simplicity in the figure we assume that $m_a,m_b$ and $m_r$ are the
2's~complement operands, respectively the result, together with their
appended round-bits.

\subsubsection{Addition, the "far case"}

The remaining cases dealt with are the situations where the result of
adding or subtracting the aligned significands at most requires
normalization by a single right or left shift. Since negation is a constant
time operation we may assume that an effective 2's~complement addition is
to be performed of the left-aligned larger operand (appended with copies of
the round-bit) and the sign-exended smaller operand. Rounding can then be
performed as usual by truncation, noting that here there are only two
log-time operations, the variable amount of alignment shifts and the
addition.  Compared with the IEEE-754 standard number representation the
``expensive'' determination of a sticky bit and rounding incrementation is
avoided.  Figure~\ref{fig-far-path} shows a possible two-stage pipeline
implementation.

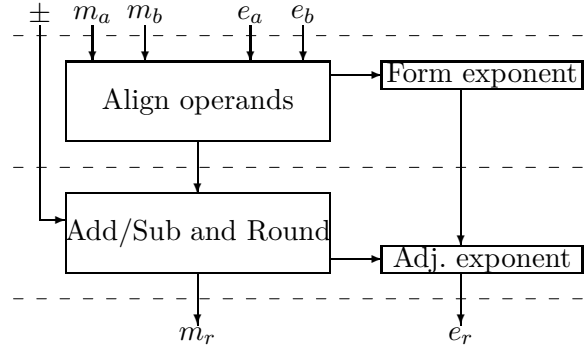
\begin{figure}[ht]
\begin{center}\small
\setlength{\unitlength}{0.7mm}
\begin{picture}(120,53)(15,50)
\put(25,102){\line(0,-1){37}}\put(25,104){\mb{$\pm$}}
\put(25,65){\vector(1,0){5}}

\put(35,102){\vector(0,-1){7}}\put(35,104){\mb{$m_a$}}
\put(45,102){\vector(0,-1){7}}\put(45,104){\mb{$m_b$}}

\put(65,102){\vector(0,-1){7}}\put(65,104){\mb{$e_a$}}
\put(75,102){\vector(0,-1){7}}\put(75,104){\mb{$e_b$}}

\dashline{2}(20,100)(128,100)

\put(30,80){\framebox(50,15){Align operands}}
\put(55,80){\vector(0,-1){10}}

\put(90,90){\framebox(38,5){Form exponent}}
\put(80,92.5){\vector(1,0){10}}
\put(105,90){\vector(0,-1){30}}

\dashline{2}(20,75)(128,75)

\dashline{2}(20,50)(128,50)

\put(30,55){\framebox(50,15){Add/Sub and Round}}
\put(55,55){\vector(0,-1){10}}\put(55,43){\mb{$m_r$}}

\put(90,55){\framebox(38,5){Adj.\ exponent}}
\put(80,57.5){\vector(1,0){10}}
\put(105,55){\vector(0,-1){10}}\put(105,43){\mb{$e_r$}}

\end{picture}
\end{center}
\caption{Far Path, add or subtract when $|e_a-e_b|\ge2$}
\label{fig-far-path}
\end{figure}

%%\subsubsection*{Handling of a large exponent difference}

In the case where the exponent difference exceeds the number of operand
bits, it is not necessary to form the exact sum. The result can be
constructed from the 2's~complement significand of the larger operand,
supplied with a round-bit obtained by a very simple rule providing a result
obeying the interval inclusion condition (\ref{int-condition}). Assuming
that the smaller operand is to be added, simply force the round-bit of the
result to become equal to the complemented sign-bit of the smaller operand
(and of course in case of subtraction the sign-bit).

To see this, consider the interval interpretation (\ref{interval-interp})
of the significand of the larger operand ${\cal I}\left(a,r_a\right)$
together with a bounding interval for the smaller operand
$\left[0\;;\frac{u}2\right]$ (assumed positive), where $u=\ulp(a)$ is the
unit of the least-significant position of the larger operand:
\begin{eqnarray*}
\txt{\cal I}\left(a,r_a\right)+\left[0\;;\frac{u}2\right]
& = &\txt\left[a+r_a\frac{u}2\;;a+(1+r_a)\frac{u}2\right]
+\left[0\;;\frac{u}2\right]\\
& = &\txt\left[a+r_a\frac{u}2\;;a+(2+r_a)\frac{u}2\right]\\
& = &\left\{
\begin{array}{ccl}
\left[a\;;a+u\right] & \mbox{for} & r_a=0\\
\left[a+\frac{u}2\;;a+\frac32u\right] & \mbox{for} & r_a=1.
\end{array}
\right.
\end{eqnarray*}
Hence chosing the result as ${\cal I}\left(r,r_a\right) = {\cal
  I}\left(a,1\right) = \left[a+\frac{u}2\;;a+u\right]$ it satisfies the
interval condition~(\ref{int-condition}) when the smaller operand is
positive. Similarly, if the smaller operand is negative
\begin{eqnarray*}
\txt{\cal I}\left(a,r_a\right)+\left[-\frac{u}2;0\right]
& = &\left\{
\begin{array}{ccl}
\left[a-\frac{u}2\;;a+\frac{u}2\right] & \mbox{for} & r_a=0\\
\left[a\;;a+u\right] & \mbox{for} & r_a=1
\end{array}
\right.
\end{eqnarray*}
hence the result can be chosen as ${\cal I}\left(r,r_a\right) = 
{\cal I}\left(a,0\right) = \left[a\;;a+\frac{u}2\right]$. In summary we
have:
\begin{lemma}
  Given RN-represented floating point operands $A=(s_a,e_a,f_a,r_a)$
  and $B=(s_b,e_b,f_b,r_b)$ with $p$-bit significands satisfying
  $e_a>e_b+p$, then the result of the addition $S=A+B$ can be represented
  as $S=(s_a,e_a,f_a,\bar{s}_b)$.
\end{lemma}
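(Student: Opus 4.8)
\medskip

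\noindent\textbf{Proof plan.} This lemma is the precise form of the interval computation displayed just above it, so the plan is to (i)~use the hypothesis $e_a>e_b+p$ to confine the contribution of the smaller operand, (ii)~feed this bound into the two interval sums already worked out in the text, and (iii)~check that nothing forces a change of the exponent or of the significand bits of $A$.

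I would first align both operands to a common binary scale and write the larger operand $A$ as a canonically encoded pair $(a,r_a)$, where $a$ is the scaled value of its significand field and $u=\ulp(a)$ is the weight of its least significant bit; write $(b,r_b)$ for $B$ likewise. The first real step is to show that $e_a>e_b+p$, i.e.\ $e_a\ge e_b+p+1$ for integer exponents, forces the whole interval $\inv(b,r_b)$ to lie inside $\left[0\;;\frac{u}{2}\right]$ when $s_b=0$ and inside $\left[-\frac{u}{2}\;;0\right]$ when $s_b=1$, so that one of these two intervals is a legitimate bounding interval for the smaller operand. This is a short exponent comparison, using that a $p$-bit normalized significand has magnitude below $2$, so that $|\val(b,r_b)|$ does not exceed $\frac{u}{2}$ once $e_a\ge e_b+p+1$. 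The one point that needs care is the boundary: the interval interpretation~(\ref{interval-interp}) places the right endpoint of $\inv(b,r_b)$ an extra $\ulp(b)/2$ beyond $|\val(b,r_b)|$, so one must keep the bound on $|\val(b,r_b)|$ sharp (using the exact largest value of a normalized $p$-bit significand and the weight of the round bit $r_b$) to be sure this right endpoint still does not exceed $\frac{u}{2}$. I expect this bookkeeping to be the only genuine obstacle; the remaining steps are verifications.

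With the bound of Step~1 in hand, the second step merely invokes the interval arithmetic already displayed: for a positive smaller operand, $\inv(a,r_a)+\left[0\;;\frac{u}{2}\right]$ equals $[a\;;a+u]$ when $r_a=0$ and $\left[a+\frac{u}{2}\;;a+\frac{3u}{2}\right]$ when $r_a=1$, and both of these contain $\inv(a,1)=\left[a+\frac{u}{2}\;;a+u\right]$; symmetrically, $\inv(a,r_a)+\left[-\frac{u}{2}\;;0\right]$ contains $\inv(a,0)=\left[a\;;a+\frac{u}{2}\right]$ for either value of $r_a$. Hence, choosing the result to be the pair carrying the exponent and the significand field of $A$ and the round bit $1$ when $s_b=0$ and $0$ when $s_b=1$, that is the round bit $\bar{s}_b$, its interval is contained in $\inv(a,r_a)+\left[0\;;\frac{u}{2}\right]$, respectively in $\inv(a,r_a)+\left[-\frac{u}{2}\;;0\right]$, which by Step~1 establishes the inclusion required by condition~(\ref{int-condition}) for $S=A+B$, the smaller operand contributing no more than half an ulp of the result. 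Finally, since only the round bit is altered while the bits of $A$ are copied verbatim, no carry propagates out of the significand and no normalization shift is triggered, so the exponent remains $e_a$; thus $S=A+B$ may be represented by the legitimate encoding $(s_a,e_a,f_a,\bar{s}_b)$. Effective subtraction reduces to this case via the constant-time negation $-(x,r_x)=(\bar{x},\bar{r_x})$ applied to $B$, which turns the prescribed round bit into $s_b$.
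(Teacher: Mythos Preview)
Your proposal is correct and follows essentially the same route as the paper: the lemma is indeed just the summary of the two interval computations displayed immediately above it, and your Steps~1--3 simply make explicit the bounding of the smaller operand and the absence of any carry-out or normalization, which the paper leaves implicit. The extra care you take at the boundary in Step~1 (checking that the full interval $\inv(b,r_b)$, not just $\val(b,r_b)$, fits inside $[0;\frac{u}{2}]$) is a refinement the paper does not spell out, but the overall argument is the same.
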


\subsection{Discussion of the Floating Point RN-representation}
As seen above it is straightforward to define binary floating point
representations, when the significand is encoded in the canonical
2's~complement encoding with the round-bit appended. An FPU implementation
of the basic arithmetic operations is feasible in about the same complexity
as one based on the sign-magnitude representation of the IEEE-754 standard
for binary floating point. But since the round-to-nearest functionality is
achieved at less hardware complexity, the arithmetic operations will
generally be faster, by avoiding the usual log-time ``sticky-bit''
determination and rounding incrementation. 

Negation is obtained in constant time by bit-wise inversion, noting that
the domain of representable values is sign symmetric.  Although one less
bit is used for the significand, the round-bit provides additional
information such that the discretization error is the same as in the
IEEE-754 representation of the compatible format.  Notice that if the
result is not exact, then the round-bit provides information about which
direction the rounding took. In effect the round-bit provides the same
information on the accuracy of the result as the additional bit available
in the binary IEEE-754 encoding of the significand. Just as in an x86 FPU
it is possible to signal exactness of a result.

The directed roundings can also be realized at minimal cost; however,
requiring the calculation of a ``sticky bit'', as also needed for the
directed roundings of the IEEE-754 representation, but no rounding
incrementation is needed here:
\begin{theorem}
\label{directed-roundings}
Let a number after truncation of tail $t$ have encoding $(a,r_a)$ with
sign-bit $s_a$, then the directed roundings can be realized by changing the
resulting round-bit as follows:
\[
\begin{array}{ll}
\ru: & r_a:=1\\
\rd: & r_a:=0\\
\rz: & r_a:=s_a\\
\ra: & r_a:=\bar{s}_a,
\end{array}
\]
conditional on the truncated tail $t$ (the ``sticky-bit'') being non-zero.
\end{theorem}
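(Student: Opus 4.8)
The plan is to work entirely with the value interpretation $\val(a,r_a) = a + r_a u$ together with the interval interpretation $\inv(a,r_a) = [a + r_a u/2 \,;\, a + (1+r_a)u/2]$ from (\ref{interval-interp}), and to compare the four candidate encodings against the exact pre-truncation value $\val(a,r_a) + t$, where $t$ is the truncated tail with $|t| < u/2$ (strict, since a tail of value exactly $u/2$ would have been absorbed as a round-bit by the RN structure). First I would record the two basic facts: $\val(a,0) = a$, $\val(a,1) = a+u$, and that these are the two consecutive representable values straddling the exact result, i.e. the exact value $a + r_a u + t$ lies in $(a\,;\,a+u)$ when the tail is nonzero (and its sign is the sign of $r_a u + t$ relative to... more precisely, $a + r_a u + t \in (a\,;\,a+u)$ because $r_a u + t \in (0\,;\,u)$ once $t \neq 0$: if $r_a = 0$ then $t$ could be negative, but then $a+t < a$, so I must be careful here — actually the correct framing is that truncation at a position produces $(a,r_a)$ from a longer RN-representation, and Lemma~\ref{tail-sign} tells us $r_a = 1 \Rightarrow$ the discarded part is non-positive, $r_a = 0 \Rightarrow$ non-negative, so $\val(a,r_a) + t$ is bracketed between $a$ and $a+u$ from the correct side).

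Next I would handle the two ``round toward an endpoint of the value grid'' cases $\ru$ and $\rd$: setting $r_a := 1$ gives $\val = a+u$, the smallest representable value $\ge$ the exact result (whenever the tail is nonzero, by the bracketing above), which is exactly $\ru$; symmetrically $r_a := 0$ gives $a$, which is $\rd$. Then for $\rz$ and $\ra$ I would split on the sign bit $s_a$. When $s_a = 0$ (the value is in the non-negative range, so $a \ge 0$ and $a+u > 0$), rounding toward zero means rounding down, i.e. $r_a := 0 = s_a$; rounding away from zero means rounding up, i.e. $r_a := 1 = \bar s_a$. When $s_a = 1$ (negative range, $a < 0$), rounding toward zero means rounding up, $r_a := 1 = s_a$, and rounding away means rounding down, $r_a := 0 = \bar s_a$. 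In both cases the prescriptions collapse to $r_a := s_a$ for $\rz$ and $r_a := \bar s_a$ for $\ra$. Finally I would note the conditionality: if the tail $t$ is zero the result is already exact, the exact value equals $\val(a,r_a)$, and no change to $r_a$ is legitimate — hence all four assignments are performed only when the sticky bit $t \neq 0$.

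The one subtle point — and the place I would slow down — is the boundary bookkeeping around ties and around the sign of the tail: one must be sure that after truncation the pair $(a,r_a)$ together with a nonzero $t$ really does place the exact value strictly between the two consecutive grid values $a$ and $a+u$ on the side dictated by $r_a$, so that ``$a+u$ is the least representable value above the exact result'' is literally correct rather than off by one ulp. This is precisely what Lemma~\ref{tail-sign} buys us, so the argument is: invoke Lemma~\ref{tail-sign} to fix the sign of $t$ relative to $r_a$, conclude $\val(a,r_a)+t$ lies in the half-open interval on the correct side of $[a\,;\,a+u]$, and then read off each of the four roundings as the appropriate endpoint. Everything else is a short case check on $s_a \in \{0,1\}$, and no rounding incrementation is ever needed because both candidate results $a$ and $a+u$ are already available as $(a,0)$ and $(a,1)$ — i.e. as the same 2's~complement word with round-bit $0$ or $1$.
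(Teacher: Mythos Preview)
Your plan is correct and follows essentially the same route as the paper: invoke Lemma~\ref{tail-sign} to pin down the sign of the discarded tail relative to $r_a$, conclude that the exact value lies strictly between the two grid points $\val(a,0)=a$ and $\val(a,1)=a+u$, and then read off each directed rounding as the appropriate endpoint via a short case check on $s_a$. The paper's proof is terser (it writes out only the $\ru$, $r_a=0$ case and leaves the rest as ``similarly''), but the substance is identical.
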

\begin{proof}
  Consider the case of $\ru$ when $r_a=0$. By Lemma~\ref{tail-sign} the
  least significant non-zero signed-digit of the truncated $(a,r_a)$ is
  $-1$, and if $t\ne0$ the value was effectively rounded down, thus $r_a$
  should be changed to $r_a=1$, whereas it should not be changed when
  $r_a=1$. The other cases follow similarly.
\end{proof}

% \newpage
\section{Conclusions and Discussion}

Concentrating on binary RN-represented operands over the signed digit set
$\{-1,0,1\}$, allowing trivial (constant time) rounding by truncation, 
we have previously proposed a simple encoding based on the ordinary
2's~complement representation, with negation also being a constant time
operation, which often simplifies the implementation of arithmetic
algorithms. Operands in the canonical encoding can be used directly at
hardly any penalty in the implementation of the basic arithmetic
operations, e.g., addition, subtraction, multiplication and division,
allowing constant time rounding. Thus despite the RN-representation encodes
a very special signed-digit representation, it allows the operations to be
performed in a slightly modified 2's~complement arithmetic.

The fixed point encoding immediately allows for the definition
of corresponding floating point representations, which in a comparable
hardware FPU implementation will be simpler and faster than an equivalent
IEEE-754 standard conforming implementation.

The particular feature that rounding-to-nearest is obtained by truncation,
implies that repeated roundings ending in some lower precision yields the
same result, as if a single rounding to that precision was performed. In
\cite{Lee89} it was proposed to attach some state information (2 bits) to a
rounded result, allowing subsequent roundings to be performed in such a way
that these problems are avoided. It was shown that this property holds for
any specific IEEE-754 rounding mode, including in particular for the
round-to-nearest-even mode. But the IEEE-754 roundings may still require
log-time incrementations, which are avoided with the proposed
RN-representation.

Thus in applications where conformance to the IEEE-754 standard is not
required, employing the proposed floating-point RN-representation, it is
possible to avoid the penalty of log-time roundings. Signal processing may
be an application area where specialized hardware (ASIC or FPGA) is often
used anyway, where the RN-representation can provide faster arithmetic with
un-biased round-to-nearest operations at reduced area and delay.%\\[-5ex]

\bibliographystyle{alpha} {\small

\end{document}